\documentclass[11pt,twoside]{amsart}
\usepackage{amsmath, amsthm, amscd, amsfonts, amssymb, graphicx, color}
\usepackage[bookmarksnumbered, plainpages]{hyperref}
\addtolength{\topmargin}{-1.5cm}
\linespread {1.3}
\textwidth 17cm
\textheight 23cm
\addtolength{\hoffset}{-0.3cm}
\oddsidemargin 0cm
\evensidemargin 0cm
\setcounter{page}{1}

%------------------------------------------------------------------------------------%

\newtheorem{thm}{Theorem}[section]

\newtheorem{conj}[thm]{Conjecture}

\newtheorem{rem}[thm]{\bf{Remark}}

\numberwithin{equation}{section}
\def\pn{\par\noindent}

%------------------------------------------------------------------------------------%

\begin{document}
%------------------------------------------------------------------------------------%
%%Don not change any thing in this part
%\hskip -0.2 cm
%\begin{tabular}{c r}
%\vspace{-0.6cm}
%\includegraphics[width=1.9cm]{IJGT-logo}\\
%\href{http://www.theoryofgroups.ir}{\scriptsize  \rm www.theoryofgroups.ir}\\
%\end{tabular}
%\hskip 2 cm
%\begin{tabular}{l}
%\hline
%\vspace{-0.2cm}
%\scriptsize \rm\bf International Journal of Group Theory\\
%\vspace{-0.2cm}
%\scriptsize \rm ISSN (print): 2251-7650, ISSN (on-line): 2251-7669 \\
%\vspace{-0.2cm}
%\scriptsize Vol. {\bf\rm x} No. x {\rm(}201x{\rm)}, pp. xx-xx.\\
%\scriptsize $\copyright$ 201x University of Isfahan\\
%\hline
%\end{tabular}
%\hskip 2 cm
%\begin{tabular}{c c}
%\vspace{-0.1cm}
%\includegraphics[width=1.9cm]{UI-logo}\\
%\href{http://www.ui.ac.ir}{\scriptsize \rm www.ui.ac.ir}\\
%\vspace{-1cm}
%\end{tabular}
%\vspace{1.3 cm}

%------------------------------------------------------------------------------------%

\title{NONINNER AUTOMORPHISMS OF ORDER p FOR FINITE p-GROUPS OF COCLASS 2}
\author{A. Abdollahi$^*$, S. M. Ghoraishi, Y. Guerboussa,  M. Reguiat and B. Wilkens}

\thanks{{\scriptsize
\hskip -0.4 true cm MSC(2010): Primary: 20D45 Secondary: 20E36.
\newline Keywords: Non-inner automorphism; finite $p$-groups; Coclass $2$.\\
%Received: dd mmmm yyyy, Accepted: dd mmmm yyyy.\\
$*$Corresponding author}}
\maketitle

%------------------------------------------------------------------------------------%
%This part will be filled in by IJGT
%\begin{center}
%Communicated by\;
%\end{center}
%------------------------------------------------------------------------------------%

\begin{abstract}
Every  finite $p$-group of coclass $2$ has a noninner automorphism of order $p$ leaving  the center elementwise fixed.
\end{abstract}

%\vskip 0.2 true cm

%------------------------------------------------------------------------------------%

%\pagestyle{myheadings}
%
%\markboth{\rightline {\sl Int. J. Group Theory x no. x (201x) xx-xx \hskip 4 cm  A. Abdollahi, S. M. Ghoraishi and  B. Wilkens }}
 %        {\leftline{\sl Int. J. Group Theory x no. x (201x) xx-xx \hskip 4 cm A. Abdollahi, S. M. Ghoraishi and  B. Wilkens }}

%\bigskip
%\bigskip

%------------------------------------------------------------------------------------%
%------------------------------------------------------------------------------------%
\section{\bf Introduction}
\vskip 0.4 true cm

Let $G$ be a finite nonabelian $p$-group. A longstanding conjecture asserts that every finite nonabelian $p$-group admits a noninner
automorphism of order $p$ (see also Problem 4.13 of [Ko]). This conjecture has been settled  for various  classes of $p$-groups $G$ as follows:
\begin{itemize}
\item if $G$ is regular \cite{S, DS};
\item if $G$ is nilpotent of class $2$ or $3$ \cite{A,AGW,L};
\item if the commutator subgroup of $G$ is cyclic \cite{jam};
\item if $G/Z(G)$ is powerful, \cite{AB};
\item if $C_G(Z(\Phi(G)))\neq \Phi(G)$ \cite{DS};
\end{itemize}
There are some other partial results on the conjecture \cite{SMG,shab}.
In most of the above cited results on the conjecture, it is proved that $G$ has often a noninner automorphism of order $p$ leaving the center $Z(G)$  or Frattini subgroup $\Phi(G)$ of $G$ elementwise fixed.

It is proved in \cite{AG} that if  $G$ is regular or  nilpotent of class $2$ or if  $G/Z(G)$ is powerful, then $G$ has a noninner automorphism of order $p$ acting trivially on $Z(G)$. The following conjecture was put forwarded in \cite{AG}:

\begin{conj}[Conjecture 2 of \cite{AG}] \label{conj2} Every finite nonabelian $p$-group admits a noninner automorphism of order $p$ leaving
the center elementwise fixed.
\end{conj}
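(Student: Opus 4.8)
The plan is to argue by contradiction through a minimal counterexample, but over the whole class of finite nonabelian $p$-groups rather than any restricted family. Let $G$ be a finite nonabelian $p$-group of least order admitting no noninner automorphism of order $p$ fixing $Z(G)$ elementwise. Since the center-fixing form of the conjecture is already known for regular groups, for groups of class $2$, and when $G/Z(G)$ is powerful \cite{AG}, the group $G$ is none of these; I would first upgrade the remaining cited results (class $3$ \cite{A,AGW,L}, cyclic commutator \cite{jam}, and the criterion $C_G(Z(\Phi(G)))\neq\Phi(G)$ \cite{DS}) to their center-fixing versions, so that $G$ may also be assumed to have class at least $4$, noncyclic commutator subgroup, and to satisfy $C_G(Z(\Phi(G)))=\Phi(G)$. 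Crucially, at no stage do I impose a bound on the coclass: the entire difficulty is to run the argument with the nilpotency class and coclass unbounded.

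The first engine is the counting of central automorphisms. Writing $\Omega_1(Z(G))$ for the subgroup of elements of order dividing $p$ in $Z(G)$, every central automorphism of order dividing $p$ fixing $Z(G)$ pointwise has the form $g\mapsto g\,f(g)$ for a homomorphism $f\colon G/G'Z(G)\to\Omega_1(Z(G))$, and conversely; these automorphisms thus form a group $A\cong\operatorname{Hom}(G/G'Z(G),\,\Omega_1(Z(G)))$. The inner ones among them are exactly those induced by $g\in Z_2(G)$ with $g^p\in Z(G)$, acting through $x\mapsto[g,x]$, and they form a subgroup $B\cong\Omega_1(Z_2(G)/Z(G))$. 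Hence $G$ possesses a noninner central automorphism of order $p$ fixing $Z(G)$ precisely when $A\supsetneq B$, that is, when the commutator pairing
\[
Z_2(G)/Z(G)\times G/G'Z(G)\longrightarrow\Omega_1(Z(G)),\qquad (gZ(G),\,xG'Z(G))\mapsto[g,x],
\]
fails to make the natural injection $\Omega_1(Z_2(G)/Z(G))\hookrightarrow A$ surjective. So the minimal counterexample must sit in the ``tight'' case where this map is an isomorphism; in particular the pairing is nondegenerate and $\operatorname{rank}(Z_2(G)/Z(G))=\operatorname{rank}(Z(G))\cdot d(G/G'Z(G))$, a rigidity I would exploit below.

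In the tight case no center-fixing automorphism of order $p$ can be central, so the second engine must produce one that fixes $Z(G)$ elementwise yet acts nontrivially on $G/Z(G)$. I would parametrize such automorphisms cohomologically: those fixing $Z(G)$ and inducing a prescribed automorphism on $G/Z(G)$ form a torsor under the derivations $Z^1(G/Z(G),Z(G))$, with the inner ones corresponding to coboundaries $B^1$, so that a noninner example of order $p$ amounts to a nonzero class in $H^1(G/Z(G),Z(G))$ compatible with the order-$p$ constraint. The rigidity just extracted, together with $C_G(Z(\Phi(G)))=\Phi(G)$ and the failure of $G/Z(G)$ to be powerful, would feed a (non)vanishing analysis of this $H^1$, ideally reduced to a smaller group by passing to a central quotient $G/N$ with $|N|=p$ and $N\le Z(G)$ (nonabelian since $G'$ is noncyclic) and lifting the center-fixing automorphism of $G/N$ furnished by the induction hypothesis.

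\noindent\textbf{Main obstacle.} The genuine difficulty is exactly this last step in full generality: lifting, or directly constructing, the cohomology class when the class and coclass of $G$ are arbitrarily large and none of the special structures (regularity, small class, powerful quotient, cyclic commutator, the $\Phi$-criterion) is available. This residual tight case is the open heart of Conjecture~\ref{conj2}; a proof confined to small coclass evades it by relying on the rigid lower-central structure there, but for the full conjecture no such specialization is permitted, and a uniform treatment of the non-central construction—most plausibly a Tate-cohomology or derivation argument that survives the induction on central quotients—is precisely what must be supplied.
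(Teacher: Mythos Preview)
The statement you have targeted is recorded in the paper as a \emph{conjecture}, not a theorem: the paper does not prove Conjecture~\ref{conj2} in general, only its coclass-$2$ instance (Theorem~\ref{main}). Correspondingly, your proposal is not a proof but a strategy sketch, and you say so yourself in the final paragraph: the ``non-central construction'' in the tight case is left as ``precisely what must be supplied.'' That is the genuine gap. Your first engine (counting central, center-fixing automorphisms of order $p$ via $\operatorname{Hom}(G/G'Z(G),\Omega_1(Z(G)))$ against the inner ones coming from $\Omega_1(Z_2(G)/Z(G))$) is correct and essentially reproduces the criterion behind Remark~\ref{AB-lem}, but it only isolates the tight case; it does not dispose of it. Your second engine, the $H^1(G/Z(G),Z(G))$ reformulation together with induction through a central quotient $G/N$, is where the argument actually fails to close: the center of $G/N$ may be strictly larger than $Z(G)/N$, so a center-fixing automorphism of $G/N$ need not correspond to one fixing $Z(G)$ after lifting, and even when a lift exists it can become inner in $G$. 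No general nonvanishing theorem for that $H^1$ of the required shape is available, which is exactly why the conjecture is open.

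For comparison, the paper's argument for coclass $2$ does not attempt any such uniform cohomological step. It uses the coclass bound to force $|Z(G)|\in\{p,p^2\}$. If $|Z(G)|=p$ every automorphism of $p$-power order already fixes $Z(G)$, and a separate construction (Theorem~\ref{main0}) produces the noninner automorphism: for odd $p$ one builds an explicit derivation from an extraspecial quotient $G/\gamma_3(G)G^p$ into $\Omega_1(Z_2(G))$ and checks the resulting automorphism is noninner using $Z_3(G)\le Z(\Phi(G))$; for $p=2$ the quotient $G/Z_2(G)$ is forced to be dihedral and $G'$ is shown to be cyclic, whence Remark~\ref{jam} applies. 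If $|Z(G)|=p^2$, either $Z(G)\le\Phi(G)$ and Remark~\ref{DS} already gives a $\Phi(G)$-fixing (hence $Z(G)$-fixing) noninner automorphism, or $Z(G)\not\le\Phi(G)$ and one passes to a maximal-class maximal subgroup. None of these moves survives once the coclass is unbounded, which is consistent with your own diagnosis of the obstacle.
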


Our main result   is to confirm the validity of  Conjecture \ref{conj2} for $p$-groups of coclass $2$.
\begin{thm}\label{main}
Every  finite $p$-group of coclass $2$ has a noninner automorphism of order $p$ leaving  the center elementwise fixed.
\end{thm}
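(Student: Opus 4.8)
I would argue by contradiction, taking a counterexample $G$ of minimal order: a finite nonabelian $p$-group of coclass $2$ in which every automorphism of order $p$ fixing $Z(G)$ elementwise is inner. The first task is to strip away the easy cases using the results quoted in the introduction: $G$ cannot be of class $2$ or $3$ (\cite{A,AGW,L}), $G'$ cannot be cyclic (\cite{jam}), $G/Z(G)$ cannot be powerful (\cite{AG}), and one may assume $C_G(Z(\Phi(G)))=\Phi(G)$, since otherwise \cite{DS} already yields a noninner automorphism of order $p$ fixing $\Phi(G)$ --- hence $Z(G)$ --- elementwise (the residual possibility $Z(G)\not\le\Phi(G)$, in which $G$ acquires a direct factor $C_p$ over a $p$-group of coclass $1$, I would dispose of by the counting below). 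One is then left with $G$ of class $c\ge 4$ and order $p^{c+2}$, and, because the coclass is $2$, the lower central factors $\gamma_i(G)/\gamma_{i+1}(G)$ have order $p$ for all but at most two values of $i$; in particular $d(G)=\log_p|G/\Phi(G)|\in\{2,3\}$, and the elementary bound $\log_p|Z(H)|\le\mathrm{cc}(H)$ --- valid for every nonabelian $p$-group $H$, since then $H/Z(H)$ is noncyclic --- applied to $H=G$ and to $H=G/Z(G)$ (which again has coclass $\le 2$) gives $\log_p|Z(G)|\le 2$ and $\log_p|Z_2(G)/Z(G)|\le 2$.

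The engine for producing automorphisms is the classical derivation construction. For an abelian normal subgroup $N\trianglelefteq G$ with $Z(G)\le N$, each crossed homomorphism $\delta\colon G/N\to N$ (for the conjugation action, which factors through $G/N$) gives an automorphism $\varphi_\delta(g)=g\,\delta(gN)$ that is trivial on $G/N$ and fixes $N$ --- in particular $Z(G)$ --- elementwise; from $\varphi_\delta^{\,k}(g)=g\,\delta(gN)^k$ one sees that $\varphi_\delta$ has order dividing $p$ exactly when $\delta$ is valued in $\Omega_1(N)$, and it is inner exactly when $\delta(gN)=[g,x]$ for some $x$ with $[G,x]\le N$. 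Taking $N=Z(G)$, where every homomorphism $G/Z(G)\to Z(G)$ is a crossed homomorphism, the automorphisms of order dividing $p$ so obtained form a group isomorphic to $\mathrm{Hom}\bigl(G/Z(G)\Phi(G),\Omega_1 Z(G)\bigr)$, of order $p^{d^{\ast}e}$ with $d^{\ast}=\dim_{\mathbb{F}_p}\bigl(G/Z(G)\Phi(G)\bigr)$ and $e=d(Z(G))$, while the inner ones among them (which come from $Z_2(G)$, the map $x\mapsto\mathrm{conj}_x$ having kernel $Z(G)$) form a subgroup of order $p^{d(Z_2(G)/Z(G))}$. Thus, whenever
\[
d^{\ast}\cdot e \;>\; d\bigl(Z_2(G)/Z(G)\bigr),
\]
$G$ has a noninner automorphism of order $p$ fixing $Z(G)$ elementwise, contrary to assumption. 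Forcing the opposite inequality, together with the bounds of the first paragraph and the fact that $d^{\ast}=d(G)\ge 2$ when $Z(G)\le\Phi(G)$, pins the counterexample down: one is driven to $Z(G)\le\Phi(G)$, $d(G)=d^{\ast}=2$, $e=1$, $d(Z_2(G)/Z(G))=2$ --- the cases $d(G)=3$ and $Z(G)\not\le\Phi(G)$ collapsing by a short check (in each, $Z_2(G)/Z(G)=Z(G/Z(G))$ turns out cyclic while $d^{\ast}e\ge 2$) --- and further, since $Z(G)\cong C_{p^2}$ would make $G/Z(G)$ of coclass $1$ with centre of order $p$, to $Z(G)\cong C_p$ and $Z_2(G)/Z(G)\cong C_p\times C_p$.

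What remains --- and this is the hard part --- is to rule out this one surviving configuration: $G$ is $2$-generated of coclass $2$ and class $\ge 4$, with $Z(G)\cong C_p$, $Z_2(G)$ of order $p^3$ and class $\le 2$, and $G/Z(G)$ a $2$-generated coclass-$2$ group with centre $C_p\times C_p$. Here I would bring in the detailed structure theory of coclass-$2$ $p$-groups --- the near-uniserial action of $G$ on its lower central series, the position of the single nontrivial jump, the description of the two-step centralizers $C_G(\gamma_i(G)/\gamma_{i+2}(G))$, and, via the reduction $C_G(Z(\Phi(G)))=\Phi(G)$, the fact that $G/\Phi(G)\cong C_p\times C_p$ acts \emph{faithfully} on $Z(\Phi(G))$, forcing $Z(\Phi(G))$ noncyclic and fixing the position of $Z(G)$ inside it --- and then apply the derivation mechanism with a larger abelian normal subgroup $N$: for instance $N=Z_2(G)$ when it is abelian, or else $Z(\Phi(G))$, $C_G(\gamma_{c-1}(G))$, or the centre of a carefully chosen maximal subgroup. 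The goal is to show that for a suitable such $N$ the group $Z^1(G/N,\Omega_1 N)$ of exponent-$p$ crossed homomorphisms is strictly larger than the part coming from inner automorphisms, which by the mechanism above gives the forbidden automorphism and the contradiction. I expect this final step to be the real obstacle: the surviving configuration is very rigid, so the inner contribution must be computed exactly rather than estimated, the relevant invariant is the first cohomology $H^1(G/N,\Omega_1 N)$ of an honest --- and possibly nonabelian, hence nonsplit --- section rather than a plain $\mathrm{Hom}$-group, and $p=2$ has to be handled alongside odd $p$, since regularity, powerfulness, and the structure of maximal-class groups all behave differently in characteristic two.
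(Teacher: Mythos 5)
Your reductions are sound and run parallel to the paper's: the counting criterion you set up (noninner order-$p$ automorphisms fixing $Z(G)$ exist whenever $d^{\ast}e>d(Z_2(G)/Z(G))$) is exactly the center-fixing analogue of Remark \ref{AB-lem}, and it does correctly eliminate $d(G)=3$, $Z(G)\not\le\Phi(G)$ and $Z(G)\cong C_{p^2}$, leaving the same critical configuration the paper reaches at the start of the proof of Theorem \ref{main0}: $d(G)=2$, $Z(G)\cong C_p$, $Z_2(G)$ noncyclic of order $p^3$, class at least $4$. (A small slip: $Z(G)\not\le\Phi(G)$ does not in general split off a direct $C_p$ factor --- it forces $|Z(G)|=p^2$ and a maximal-class maximal subgroup, which is how the paper treats it --- but your counting disposes of that case anyway, so nothing is lost.)

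The genuine gap is that the surviving configuration, which you yourself call ``the hard part,'' is never actually dealt with: your final paragraph is a plan (choose some abelian normal $N$ among $Z_2(G)$, $Z(\Phi(G))$, $C_G(\gamma_{c-1}(G))$, compare $Z^1(G/N,\Omega_1 N)$ with the inner contribution, invoke coclass structure theory, treat $p=2$ somehow), with no choice of $N$ shown to work and the decisive computation explicitly deferred. This is precisely where all the content of the paper lies, and its solution is more concrete and more elementary than your sketch suggests. For $p\ge 3$ the paper assumes $G$ is not powerful (else \cite{AB} applies), picks $u\in\Omega_1(Z_2(G))\setminus Z(G)$, and exploits the assumed innerness of the automorphism extending $x\mapsto xu$ to pin down $Z_3(G)\le Z(\Phi(G))$ and to place $u$ and $z$ inside $L=\gamma_3(G)G^p$; then $G/L$ is extraspecial of order $p^3$ and exponent $p$, and the explicit crossed homomorphism $\bar{x}^i\bar{y}^j\bar{w}^k\mapsto u^jz^k$ yields an order-$p$ automorphism $\sigma_f$ which cannot be inner, since any inducing element would lie in $Z_3(G)\le Z(\Phi(G))$ and hence centralize $w\in\Phi(G)$, while $\sigma_f(w)=wz$. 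For $p=2$ the paper argues quite differently: $G/Z_2(G)$ has coclass $1$, capability (Remark \ref{capable}, with Remark \ref{berk}) forces it to be dihedral, and a short computation then shows $G'$ is cyclic, contradicting \cite{jam}. Neither the localization trick $Z_3(G)\le Z(\Phi(G))$, nor the passage to the quotient $G/\gamma_3(G)G^p$, nor the capability argument for $p=2$ appears in your proposal, so as written it establishes only the reduction, not the theorem.
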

\section{\bf Preliminaries}
\vskip 0.4 true cm

 Let $G$ be a nonabelian finite $p$-group. We use the following facts in the proof.

\begin{rem}[{\cite[Theorem]{DS}}]\label{DS}
    If $C_G(Z(\Phi(G)))\neq \Phi(G)$, then $G$  has a noninner automorphism of order $p$ leaving   the Frattini subgroup $\Phi(G)$ elementwise fixed.
\end{rem}
\begin{rem}[{\cite[Lemma 2.2]{AB}}]\label{AB-lem}
If $d(Z_2(G)/Z(G))\neq d(G)d(Z(G))$ then $G$ has a  noninner automorphism of order $p$ leaving the Frattini  subgroup $\Phi(G)$ of $G$ elementwise fixed.
\end{rem}
\begin{rem}\label{jam}
 If the commutator subgroup of $G$ is cyclic, then  $G$ has a noninner automorphism of order $p$ leaving $\Phi(G)$ elementwise fixed whenever
$p > 2$, and leaving either $\Phi(G)$ or $Z(G)$ elementwise fixed whenever $p = 2$.
\end{rem}
\begin{rem}[{\cite[Corollary 1.7 and Theorem 1.2]{berk}}]\label{berk}
Let $G$ be a  nonabelian $2$-group of coclass $1$. Then $G$ it is isomorphic to one of the following groups:
\begin{itemize}
\item
$D_{2^{n+1}}=\langle a, b\mid a^{2^n}=b^2=1,  bab=a^{-1}\rangle$ the dihedral group of order $2^{n+1}$ for some $n\geq 2$.
\item
$Q_{2^{n+1}}=\langle a,  b \mid  a^{2^k}=1, a^{2^{n-1}}=b^2, b^{-1}ab=a^{-1}\rangle$ the generalized
quaternion group of order $2^{n+1}$ for some $n\geq 2$.
\item
$SD_{2^{n+1}}=\langle a, b\mid a^{2^n}=b^2=1,  bab=a^{-1+2^{n-1}}\rangle$ the semidihedral group of order $2^{n+1}$ for some $n\geq 3$.
\end{itemize}
Note that  the  nilpotency class of $G$ is $n$.
\end{rem}
%------------- capable
Recall that a group $G$ is called capable if it is the group of inner automorphisms
of some group, that is, if there exists a group $H$ with $H/Z(H)\cong G$.
\begin{rem}[{\cite[Theorem]{Sh}}]\label{capable}
 The generalized quaternion group  of order at leat $8$ and  the semidihedral
group of order at least $16$, cannot be normal subgroups of a capable group.
\end{rem}
\section{\bf Proof of Theorem \ref{main}}
\vskip 0.4 true cm

We first  prove the following
\begin{thm}\label{main0}
   Let $G$ be a  finite $p$-group of coclass $2$. Then $G$ admits a noninner automorphism of order $p$.
\end{thm}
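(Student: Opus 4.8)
The plan is to argue by contradiction. Suppose $G$ is a finite $p$-group of coclass $2$ admitting no noninner automorphism of order $p$, and harvest structural information from the Remarks of Section~2. By Remark~\ref{DS} we must have $C_G(Z(\Phi(G)))=\Phi(G)$; by Remark~\ref{jam}, $G'$ is not cyclic; and by Remark~\ref{AB-lem}, $d(Z_2(G)/Z(G))=d(G)\,d(Z(G))$. The first equality already forces $Z(G)\le C_G(\Phi(G))\le\Phi(G)$, hence $Z(G)\le Z(\Phi(G))$, and it says that $Q:=G/\Phi(G)$ acts faithfully by conjugation on the abelian $p$-group $A:=Z(\Phi(G))$. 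Since $Q$ is elementary abelian and $p$-groups of coclass $2$ satisfy $d(G)\le 3$, we have $|Q|\in\{p^{2},p^{3}\}$; the faithful embedding $Q\hookrightarrow\mathrm{Aut}(A)$ then restricts $A$: for $p$ odd it forces $A$ to be noncyclic with $A\not\cong C_p\times C_p$, while for $p=2$ the case $A\cong C_{2^m}$, $m\ge 3$, also survives because $\mathrm{Aut}(C_{2^m})$ has $2$-rank $2$.

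I would then use the rigidity of the lower central series in coclass $2$. Writing $|G|=p^{n}$, the class of $G$ is $n-2$, so if $\gamma_i=\gamma_i(G)$ and $|\gamma_i/\gamma_{i+1}|=p^{n_i}$ then $\sum_i(n_i-1)=2$, with $n_1=d(G)$ and $n_2=1$ whenever $d(G)=2$. Only two shapes are possible: $d(G)=3$ with $\gamma_i/\gamma_{i+1}$ cyclic for every $i\ge 2$, or $d(G)=2$ with exactly one factor $\gamma_j/\gamma_{j+1}$, $j\ge 3$, of order $p^{2}$ and all others of order $p$. Feeding this, together with $\Phi(G)=G'G^{p}$, the inclusion $Z(G)\le A$, the shape of $A$ found above, and the noncyclicity of $G'$, into the analysis of how $Q$ can embed faithfully into $\mathrm{Aut}(A)$, should reduce the candidates for $G$ to a short explicit list.

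Finally one disposes of each surviving configuration. For $p$ odd I expect that in each case one of the three negated hypotheses in fact holds — most often that $G'$ is cyclic, or that the rank equality $d(Z_2(G)/Z(G))=d(G)d(Z(G))$ breaks because $Z_2(G)/Z(G)=Z(G/Z(G))$ is too small — which is the desired contradiction; in the rare case where no Remark applies verbatim one writes down an automorphism of order $p$ by hand, sending a carefully chosen generator $x$ to $xz$ with $z\in\Omega_1(A)$ suitably placed, and checks it is not inner. For $p=2$ the delicate configurations are those in which $Z(G)$ is small so that a normal section of $G$ acquires coclass $1$; by Remark~\ref{berk} such a section is dihedral, generalized quaternion or semidihedral. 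If $G/Z(G)$ itself is of coclass $1$, then $Z_2(G)/Z(G)=Z(G/Z(G))$ is cyclic of order $2$ while $d(G)d(Z(G))\ge 2$, so Remark~\ref{AB-lem} already applies; and in the remaining cases $\mathrm{Inn}(G)\cong G/Z(G)$ would contain a generalized quaternion subgroup of order $\ge 8$ or a semidihedral subgroup of order $\ge 16$ as a normal subgroup, contradicting its capability by Remark~\ref{capable}.

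The main obstacle will be the middle step: converting the faithfulness of the conjugation action of $G/\Phi(G)$ on $Z(\Phi(G))$, together with the tight lower central series of a coclass $2$ group, into a list of candidates short enough for the endgame, and in particular controlling the $p=2$ branch so that Remark~\ref{capable} can be invoked at exactly the right normal section.
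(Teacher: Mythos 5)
There is a genuine gap: your proposal is a plan whose decisive steps are exactly the ones left unexecuted. The middle step you yourself flag as the obstacle --- turning the faithful action of $G/\Phi(G)$ on $Z(\Phi(G))$ plus the coclass-$2$ shape of the lower central series into ``a short explicit list'' of candidates --- does not happen: $2$-generated coclass-$2$ groups form infinite families of unbounded order, and the constraints you collect (faithful action, noncyclic $G'$, the rank equality from Remark~\ref{AB-lem}) do not reduce them to finitely many configurations. Consequently your endgame for odd $p$, ``one writes down an automorphism of order $p$ by hand, sending $x$ to $xz$ \dots and checks it is not inner,'' is precisely the content of the theorem, not a step of its proof. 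The paper's argument at this point is a genuine construction: after reducing (via the powerful-group result of \cite{AB}, which you never invoke) to the case $\gamma_2(G)\not\leq G^p$, it shows $Z_3(G)\leq Z(\Phi(G))$, proves that $\overline{G}=G/\gamma_3(G)G^p$ is extraspecial of order $p^3$ and exponent $p$, defines an explicit derivation $\bar f:\overline{G}\to\langle u,z\rangle\leq Z_2(G)$, and verifies that the resulting automorphism $\sigma_f$ has order $p$ and cannot be inner because any inducing element would lie in $Z_3(G)\leq Z(\Phi(G))$ and hence centralize $w=[x,y]$. Nothing in your sketch substitutes for this. (Minor but real inaccuracies compound the problem: $n_1=d(G)$ need not hold, and $n_2=1$ for $d(G)=2$ is asserted, not proved.)

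The $p=2$ branch has a second gap. You dispose of the generalized quaternion and semidihedral possibilities by claiming they would occur as \emph{normal subgroups} of $G/Z(G)$; in fact they arise as the quotient $G/Z_2(G)$, which is capable because it equals $\mathrm{Inn}(G/Z(G))$ --- that is how Remark~\ref{capable} is correctly applied (a capable group cannot itself be $Q_{2^{n+1}}$ or $SD_{2^{n+1}}$ since it is a normal subgroup of itself). More importantly, after Remarks~\ref{berk} and~\ref{capable} the \emph{dihedral} case $G/Z_2(G)\cong D_{2^{n+1}}$ survives, and your proposal says nothing about it; the paper must work here, using the dihedral presentation and the structure $Z_2(G)=\langle y^{2^n}\rangle\times\langle v\rangle$ to show $G'=\langle y^2t\rangle$ is cyclic and then contradict Remark~\ref{jam}. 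You also omit the base cases of class $2$ and $3$ (\cite{A},\cite{AGW}) needed to assume $n\geq 2$ before the coclass-$1$ classification of Remark~\ref{berk} can be applied. So while your opening harvest of consequences from Remarks~\ref{DS}, \ref{AB-lem} and~\ref{jam} is sound and overlaps with the paper's setup, the proof as proposed would not go through.
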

\begin{proof}
Suppose, to the contrary,
  that every automorphism of order $p$ of  $G$ is inner, By Remark \ref{AB-lem} we may assume that $d(Z_2(G)/Z(G))=d(G)d(Z(G))$. Since $G$ is of  coclass $2$,  $|Z(G)|=p$ and
$d(G)=2$, $Z_2(G)/Z(G)$ is elementary abelian of rank $2$.
This implies that
\begin{itemize}
  \item
  $Z_2(G)$ is a noncyclic abelian group of order $p^3$. Indeed,  $g^p\in Z(G)$ for every $g\in Z_2(G)$.  Thus  $[Z_2(G),\Phi(G)]=1.$
  Now it follows from Remark \ref{DS} that $Z_2(G)\leq Z(\Phi(G))$.  And,
  \item
  $G/Z_2(G)$ is of coclass $1$.
\end{itemize}

Let $\tilde G=G/Z_2(G)$ and $n=cl(\tilde G)$.
\paragraph{\bf Case $p\geq 3$}
By \cite{AB},  we may assume that $G$ is not powerful, that is
\begin{align}\label{not powerful}
  \gamma_2(G)\not\leq G^p
\end{align}

Let  $u\in \Omega_1(Z_2(G))\setminus Z(G)$ and $M=C_G(u)$.  Then $M$  is a
maximal subgroup of $G$. Let  $y\in M\setminus \Phi(G)$,  $x\in G\setminus M$. Assume  $L=\gamma_3(G)G^p$,
$w=[x,y]$ and $z=[x,u]$. Thus $Z(G)=\langle z\rangle$.

The map  $\alpha:x\mapsto xu$ , $m\mapsto m$, for all
$m\in M$ extends to an automorphism of order
$p$ of $G$. By assumption  $\alpha=\theta_t$, the inner automorphism induced by some $t\in G$.
Since $g^{-1}\alpha(g)\in Z_2(G)$, for all $g\in G$, it follows that $t\in Z_3(G)$ and since $\alpha(x)=xu$, $t\notin Z_2(G)$.
Since $t\in C_G(M)\leq C_G(\Phi(G))\leq Z(\Phi(G))$ and $G/Z_2(G)$ has coclass 1, we obtain  $Z_3(G) = \langle t,\, Z_2(G) \rangle$  and so
 \begin{align}\label{Z3}
 Z_3(G)\leq Z(\Phi(G)).
 \end{align}
%Let  $cl(G/Z_2(G)) = k.$
We have seen that $\langle t \rangle Z_2(G) = \gamma_{n}(G) Z_2(G),$ whence $u = [x,\,t]$ lies in $\gamma_{n+1}(G) \langle z \rangle = \gamma_{n+1}(G) $.   It follows from \eqref{Z3} that  $n \geq 2,$ whence $\langle u,\, z \rangle \leq L.$\\

 Let $\overline{G} = G/L.$ Then $\overline{G}$ is a two-generator group of exponent $p$ and class at most $2,$ which makes it either elementary abelian of order $p^2$ or extraspecial of order $p^3$ and exponent $p.$ If $\overline{G}$ is abelian, then $\gamma_2(G) \leq L = \gamma_3(G)G^p $, which is possible only if
 $\gamma_2(G)\leq G^p$  contradicting \eqref{not powerful}. Thus $\overline{G}$ is extraspecial.\\
  Every element of $\bar{v}$ of $\overline{G}$ is uniquely represented as a product $\bar{v} = \bar{x}^i \bar{y}^j \bar{w}^k$($i,\, j,\, k \in \{0, \ldots, p-1\}$).  Let
define $\bar{f}: \, \bar{G} \rightarrow \langle u,\, z \rangle$ by $\bar{f} (\bar{x}^i \bar{y}^j \bar{w}^k) = u^j z^k.$ As $[L,\, Z_2(G)] = 1,$  the group $V = \langle u,\, z \rangle$ becomes a $\overline{G}$-module via $v^{\bar{g}} = v^g$ ($g \in G,\, v \in V$).\\
\noindent We claim that $\bar{f}$ is a derivation: Indeed, for $j,\, \ell \in \mathbb{Z},$ $[\bar{y}^j,\,\bar{x}^{\ell}]= \bar{w}^{-j\ell}$ and
 $[u^j,\,x^{\ell}] = z^{-j\ell},$ whence, for $i,\, j,\, k,\, i',\, j',\, k' \in \mathbb{Z},$ we have
 \begin{align*}
\bar{f} (\bar{x}^i \bar{y}^j \bar{w}^k \bar{x}^{i'} \bar{y}^{j'} \bar{w}^{k'})&=
\bar{f} (\bar{x}^{i+i'} \bar{y}^{j+j'} \bar{w}^{k+k' - ji'})\\
&= u^{j+j'} z^{k+k' - ji'}\\
&=u^{j+j'}[u^j,\, x^{i'}]  z^{k+k'} \\
&= (u^j z^k)^{x^{i'}y^{j'} w^{k'}} u^{j'} z^{k'}\\
& = \bar{f}(\bar{x}^i \bar{y}^j \bar{w}^k)^{\bar{x}^{i'} \bar{y}^{j'} \bar{w}^{k'}} \bar{f}(\bar{x}^{i'} \bar{y}^{j'} \bar{w}^{k'}).
 \end{align*}
\noindent Now define $f: \, G \rightarrow V$ by $f(g) = \bar{f}(\bar{g})$ ($g \in G$). It is clear that $f$ is a derivation. Let  $\sigma_f \in Aut(G)$ be defined by $\sigma_f(g) = g f(g)$ ($g \in G$). Observe that
$[G,\,\sigma_f] = V \leq L \leq C_G(\sigma_f).$ As $V$ is elementary abelian, it follows that
  $\sigma^p_f  (g) = g f(g)^p = g$ ($g \in G$)i.e. $o(\sigma_f) = p.$\\
If $\sigma_f=\theta_g$ is an inner automorphism  induced by the element $g$, then $[G,g]=[G,\sigma_f]\leq Z_2(G)$. Thus $g\in Z_3(G)$. By \eqref{Z3}, $g\in Z(\Phi(G))$ and so $w^g=w$. Now $w^g=\sigma_f(w)=wz$  and so $z=1$ a contradiction. This completes the proof in the case $p\geq 3$.\\

\paragraph{\bf Case $p=2$}
As we assumed, $cl(\tilde G)=n$. Thus $cl(G)=n+2$, and  $\gamma_i(G)Z_2(G)=Z_{n+3-i}(G)$ for all $i\in\{1,\dots,n\}$.  Furthermore by \cite{A} and \cite{AGW} we may assume $n \geq 2$.  Now it follows from  Remark  \ref{berk}, that $\widetilde G$ is isomorphic to either $D_{2^{n+1}}$, $Q_{2^{n+1}}$ or $SD_{2^{n+1}}$.

Since  $\widetilde G$ is capable,  Remark \ref{capable} implies that
$G$ is isomorphic to $D_{2^{n+1}}$.
%Then $G=\langle x,y\rangle$ such that $\widetilde G=\langle \tilde x,\tilde y\mid \tilde y^{2^k}=\tilde 1,\, \tilde x^2=\tilde 1,\, \tilde y^{\tilde x}={\tilde %y}^{-1}\rangle$.
     It follows that $G=\langle x,y\rangle$ for some $x,y\in G$ such that $$\widetilde G=\langle \tilde x,\tilde y\mid \tilde y^{2^n}=\tilde 1,\, \tilde x^2=\tilde 1,\, \tilde y^{\tilde x}={\tilde y}^{-1}\rangle,$$ where ${\tilde x}=xZ_2(G)$ and ${\tilde y}=yZ_2(G)$. We obtain
\begin{itemize}
  \item
  $[x,y]=y^{2}t$ for some $t\in Z_2(G)$,
  \item
   $[x,y^2]\in y^4Z(G)$,
  \item
  $[x,y^4]=y^8$ and so $\langle y^4\rangle\lhd G$.
  \item
  $\gamma_i(G)\leq \langle y^{2^{i-1}}\rangle Z(G)\leq \langle y^{2^{i-1}}\rangle$ for $3\leq i\leq n+3$.
 \end{itemize}
Now since $\gamma_{n+2}(G)\leq Z(G)$, it follows that $Z(G)=\langle y^{2^{n+1}}\rangle$.
Thus
$Z_2(G)=\langle y^{2^{n}}\rangle\times\langle v\rangle$ for some element $v$ of order $2$.

 Thus $t=y^{2^ni}v^j$ for some integers $i,j$. Then  $y^2t=y^{2(1+2^{n-1}i)}v^j$ and  $(y^2t)^2=y^{4(1+2^{n-1}i)}$. Therefore $G'=\langle [x,y], \gamma_3(G)\rangle\leq \langle y^2t, y^4\rangle=\langle y^2t\rangle$ is cyclic. Now it follows from Remark \ref{jam} that   $G$ has a noninner automorphism of order $2$  leaving either $\Phi(G)$ or $Z(G)$ elementwise fixed, a contradiction.
    This completes the proof.

\end{proof}
\noindent{\bf Proof of Theorem \ref{main}.}\\
Let $G$ be a finite $p$-group of coclass $2$. Then $|Z(G)|\in\{p,p^2\}$.

If $|Z(G)|=p$ then every $p$-automorphism of $G$ leaving $Z(G)$ elementwise fixed. Thus Theorem \ref{main0} completes the proof. Hence we may assume that $|Z(G)|=p^2$.

If $Z(G)\not\leq \Phi(G)$, then for some $g\in Z(G)$ and  some maximal subgroup $M$ of $G$ we have $g\notin M$. Thus  $cl(M)=cl(G)$ and so  $M$ is of  maximal class. It follows from \cite[Corollary 2.4.]{AB} that  $M$ has a noninner automorphism $\alpha$ of order $p$ leaving $\Phi(M)$ elementwise fixed. Now the map given by $g\mapsto g$ and $m\mapsto m^\alpha$ for all $m\in M$,
 determines a noninner automorphism of order $p$ leaving  $Z(G)$ elementwise fixed.
 Therefore we may assume that $Z(G)\leq \Phi(G)$. It follows from Remark \ref{DS} that $G$ has a noninner automorphism $\alpha$ of order $p$ leaving $\Phi(G)$ elementwise fixed. Since $Z(G)\leq \Phi(G)$, $\alpha$ leaves $Z(G)$ elementwise fixed. This completes the proof. $\hfill \Box$\\

%------------------------------------------------------------------------------------%

%-----------------------------------------------------------------------------
%-----------------------------------------------------------------------------

\bigskip
\bigskip

{\footnotesize \pn{\bf Alireza Abdollahi}\; \\ Department of Mathematics, University of Isfahan, Isfahan 81746-73441, Iran\\
{\tt Email: a.abdollahi@math.ui.ac.ir}\\

{\footnotesize \pn{\bf S. M. Ghoraishi}\; \\ The Isfahan Branch of the School of Mathematics, Institute for Research in
Fundamental Sciences (IPM) Isfahan, Iran \\
{\tt Email: ghoraishi@ipm.ir}\\

{\footnotesize \pn{\bf Yassine Guerboussa}\; \\ University Kasdi Merbah Ouargla, Ouargla, Algeria \\
{\tt Email: yassine\_guer@hotmail.fr}\\

{\footnotesize \pn{\bf Miloud Reguiat}\; \\ University Kasdi Merbah Ouargla, Ouargla, Algeria \\
{\tt Email: reguiat@gmail.com}\\

{\footnotesize \pn{\bf B. Wilkens}\; \\ Department of Mathematics, University of Botswana, Private Bag 00704, Gaborone, Botswana \\
{\tt Email: wilkensb@mopipi.ub.bw}\\

\end{document}